\newtheorem{theorem}{Theorem}
\newtheorem{remark}[theorem]{Remark}
\newtheorem{lemma}[theorem]{Lemma}
\DeclareMathOperator{\sgn}{sgn}
\newcommand{\Maj}{\mathrm{Maj}}
\title{\bf  Counterexample to\\
majority optimality in NICD with erasures}
\author[1]{Paata Ivanisvili}
\author[1]{Xinyuan Xie}
\affil[1]{Department of Mathematics, University of California, Irvine, CA 92697, USA\\\texttt{\{pivanisv,xinyuax7\}@uci.edu}}
\date{}
\begin{document}
\maketitle

\begin{abstract}
We asked GPT-5 Pro to look for counterexamples among a public list of open problems (the Simons ``Real Analysis in Computer Science'' collection). After several numerical experiments, it suggested a counterexample for the \emph{Non-Interactive Correlation Distillation (NICD) with erasures} question: namely, a Boolean function on \(5\) bits that achieves a strictly larger value of \(\mathbb{E}|f(z)|\) than the \(5\)-bit majority function when the erasure parameter is \(p=0.40\).
In this very short note we record the finding, state the problem precisely, give the explicit function, and verify the computation step by step \emph{by hand} so that it can be checked without a computer. In addition, we show that for each fixed odd \(n\) the majority is optimal (among unbiased Boolean functions) in a neighborhood of \(p=0\).
We view this as a ``little spark'' of an AI contribution in Theoretical Computer Science: while modern Large Language Models (LLMs) often assist with literature and numerics, here a concrete finite counterexample emerged.
\end{abstract}

\section*{A short preface}
Finding a successful instance in which an AI suggests a viable counterexample to a decade-plus-old open question appears, to our knowledge, to be the first publicly reported case. In most problems we tested, the models contributed little beyond literature pointers and small-scale numerics. Here, by contrast, the outcome is a clear win; had someone shown us this instance beforehand, we would have been impressed. The result also echoes the now-classical counterexamples \cite{Jain2017,Biswas23} related to ``Majority is Least Stable'' for linear threshold functions (LTFs): even if an AI merely applied a known LTF counterexample pattern in a new context and verified it, that would still merit credit.

\section{Problem statement}
We work on the hypercube \(\{-1,1\}^n\) equipped with the uniform probability measure. A Boolean function is \(f:\{-1,1\}^n\to\{-1,1\}\). We say \(f\) is \emph{unbiased} if \(\mathbb{E}[f]=0\). A sufficient condition is \emph{oddness}: \(f(-x)=-f(x)\).

\paragraph{Multilinear extension.}
Every \(f\) has a unique multilinear polynomial representation
\[
f(z)=\sum_{S\subseteq[n]}\widehat f(S)\,\prod_{i\in S} z_i, \qquad z\in[-1,1]^n,
\]
where \(\widehat f(S)=\mathbb{E}[f(x)\prod_{i\in S}x_i]\), and \(x \sim \mathrm{unif}(\{-1,1\}^{n})\).

\paragraph{Erasure model.}
Fix \(p\in(0,1)\). Let \(z\in\{-1,0,1\}^n\) have independent coordinates with
\[
\Pr[z_i=1]=\Pr[z_i=-1]=\tfrac{p}{2}, \qquad \Pr[z_i=0]=1-p.
\]
This is the standard \emph{binary erasure} NICD model (cf.\ Yang~\cite{Yang2007}).\footnote{Yang introduced and analyzed NICD for various noise models, including the erasure model; the maximization question we study below is how it is recorded in later Boolean-analysis sources and in the Simons open-problems list.}

\paragraph{Objective.}
For unbiased Boolean \(f\), define
\[
\Phi_p(f)\;:=\;\mathbb{E}_{z}\big[\,|f(z)|\,\big],
\]
where \(f(z)\) is the multilinear extension evaluated at the erasure sample \(z\).

\paragraph{Open question (as recorded in the Simons list).}
For \(p<\tfrac12\), is \(\Phi_p(f)\) maximized (over unbiased \(f\)) by a majority function? (When \(p\ge \tfrac12\), dictators are optimal; see \cite{ODWright2012,ODonnellOP}.) We refer to the Simons compilation \cite{SimonsOpen} for a concise statement.

\section{A finite counterexample at \texorpdfstring{$p=0.40$}{p=0.40}}
Set \(n=5\) and \(p=\frac{2}{5}=0.40\).
Consider the linear threshold function
\[
f(x_1,\ldots,x_5)\;=\;\sgn\!\big(x_1-3x_2+x_3-x_4+3x_5\big)
\]
(with the convention \(\sgn(t)=1\) for \(t>0\), \(-1\) for \(t<0\); note \(t\neq 0\) in our case).
Then \(f(-x)=-f(x)\), hence \(f\) is unbiased.

\begin{theorem}
For \(p=0.40\) one has
\begin{align*}
&\Phi_{0.40}(f)=\frac{2689}{6250}=0.43024,\quad
\Phi_{0.40}(\Maj_5)=\frac{5363}{12500}=0.42904,
\end{align*}
hence \(\Phi_{0.40}(f)> \Phi_{0.40}(\Maj_5)\). In particular, majority is not optimal at \(p=0.4\) already for \(n=5\).
\end{theorem}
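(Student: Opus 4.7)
The plan is a direct enumeration that uses symmetry to keep the bookkeeping manageable. For any Boolean $g:\{-1,1\}^n\to\{-1,1\}$, the multilinear extension at $z\in\{-1,0,1\}^n$ is the conditional expectation $g(z)=\mathbb{E}[g(y)\mid y_i=z_i\text{ whenever }z_i\ne 0]$ with erased coordinates independent uniform on $\{\pm 1\}$; in particular $|g(z)|\le 1$ and
\[
\Phi_p(g)\;=\;\sum_{T\subseteq[5]}\Bigl(\tfrac{p}{2}\Bigr)^{5-|T|}(1-p)^{|T|}\sum_{w\in\{\pm 1\}^{[5]\setminus T}}\bigl|g(z_{T,w})\bigr|,
\]
where $z_{T,w}$ denotes the configuration with zeros on $T$ and entries $w$ elsewhere. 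I evaluate $\Phi_{0.40}(f)$ and $\Phi_{0.40}(\Maj_5)$ separately by grouping the $3^5=243$ patterns of $z$ via the symmetries of each function, then compare the resulting rationals.

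For the LTF $f$, I first replace $x_2,x_4$ by $-x_2,-x_4$; since the erasure law is sign-symmetric on each coordinate, $\Phi_p$ is unchanged, and $f$ becomes $\sgn(y_1+3y_2+y_3+y_4+3y_5)$, which is symmetric under swapping the weight-$3$ pair $A=\{2,5\}$ and under permuting the weight-$1$ triple $B=\{1,3,4\}$. A direct inspection of signs gives the key decomposition
\[
f(y)\;=\;\begin{cases}y_2 & y_2=y_5,\\ \Maj(y_1,y_3,y_4) & y_2\ne y_5,\end{cases}
\]
so every multilinear restriction can be read off the conditional-expectation formula. I classify erasure patterns by $(u,v)=(|T\cap A|,|T\cap B|)\in\{0,1,2\}\times\{0,1,2,3\}$; in each of the $12$ sub-cases $|f(z)|$ takes one of a handful of explicit values from $\{0,\tfrac14,\tfrac12,\tfrac34,1\}$, which I sum over the non-erased $w$ and weight by $\binom{2}{u}\binom{3}{v}(1-p)^{u+v}(p/2)^{5-u-v}$. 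Setting $p=2/5$ makes each probability factor of the form $3^a/5^5$, and collecting everything over a common denominator yields $\Phi_{0.40}(f)=2689/6250$.

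For $\Maj_5$, the full $S_5\times\mathbb{Z}_2$ symmetry further collapses the enumeration to the erasure count $k\in\{0,\ldots,5\}$ together with the number $j$ of $+1$'s among the $5-k$ non-erased bits: the restriction equals $\mathbb{E}[\sgn((2j-5+k)+R_k)]$ where $R_k$ is a sum of $k$ independent Rademacher variables (ties are impossible since $5$ is odd and all weights are $1$). Tabulating these values, weighting by the count $\binom{5}{k}\binom{5-k}{j}$ and the probability $(1-p)^k(p/2)^{5-k}$, and simplifying with $p=2/5$ gives $\Phi_{0.40}(\Maj_5)=5363/12500$. Converting $\Phi_{0.40}(f)=2689/6250=5378/12500$ yields the strict inequality $5378/12500>5363/12500$, as claimed.

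The main obstacle is purely combinatorial bookkeeping: ensuring that each of the $12+6=18$ symmetry-equivalence classes is assigned the correct multiplicity and that every rational sum is added exactly. No inequality, approximation, or non-elementary tool is invoked; the argument reduces to a finite rational equality explicitly tailored for verification by hand.
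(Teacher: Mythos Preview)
Your approach is correct and essentially the same as the paper's: both proofs exploit the nested-majority structure of $f$ (the paper writes it as $f(x)=\Maj_3(-x_2,x_5,\Maj_3(x_1,x_3,-x_4))$, equivalent to your piecewise description after the sign flips) together with the $S_2\times S_3$ symmetry on the heavy/light coordinates, and handle $\Maj_5$ by conditioning on the number of erasures. The only cosmetic difference is that the paper first conditions on $(z_2,z_5)$ to reduce $\mathbb{E}|f(z)|$ to a formula in $\mathbb{E}|M|$ and derives closed-form polynomials in $p$ before plugging in $p=0.4$, whereas you tabulate the twelve $(u,v)$ classes directly at $p=2/5$.
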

\begin{proof}
See Appendix~A.
\end{proof}

\section{A related (solved) conjecture: Majority is least stable}
The ``Majority‑is‑Least‑Stable'' conjecture states: for all linear threshold functions $f:\{-1,1\}^n\to\{-1,1\}$, all odd $n$, and all $p \in [0,1]$, the noise stability
\[
\mathrm{Stab}_{p}[f]\ :=\ \sum_{S \subseteq [n]}p^{|S|}\,\widehat{f}(S)^2
\]
is minimized by the majority function $\Maj_n$.
Equivalently, with $z$ drawn from the erasure model at rate $p$, one has $\mathrm{Stab}_{p}(f)=\mathbb{E}\big[f(z)^2\big]$, so the conjecture asserts that the majority functions minimize the \(L_2\)-norm of $f(z)$ among LTFs. By contrast, the NICD‑with‑erasures question asks whether majority \emph{maximizes} the \(L_1\)-norm $\Phi_p(f)=\mathbb{E}|f(z)|$ among unbiased $f$. Although \(\mathbb{E}|f(z)| \le \sqrt{\mathbb{E}f(z)^2}\), to the best of our knowledge there is no implication in either direction between these two optimization problems.
After identifying the counterexample presented here, we noted that it is closely related in spirit to the counterexample in \cite{Jain2017}, e.g.\ to $\sgn(2x_1+2x_2+x_3+x_4+x_5)$; both have the ``two heavy + three light'' LTF structure, and up to permutation and flips of variables, they are the same function.

\begin{remark}
Comparison between \(L_1\) norms is often more delicate than comparison between \(L_2\) norms due to the lack of orthogonality. This is reflected in Appendix~A: the manual verification for \(L_1\) occupies more casework than the corresponding \(L_2\) verification in \cite{Jain2017}. This may help explain why a counterexample familiar from the ``Majority‑is‑Least‑Stable'' setting had not, to our knowledge, been tested against the NICD‑with‑erasures question until now.
\end{remark}

\begin{remark}
In the Majority‑is‑Least‑Stable question (\(L_2\) norm), the counterexample works in a neighborhood of $p=0$. On the other hand, in the NICD‑with‑erasures question there is no counterexample in a neighborhood of $p=0$ for each fixed $n$. Indeed, in this case we show that majority is optimal (see Section~\ref{majopt1}).
\end{remark}

\begin{remark}
A brute-force search is feasible for this finite question; indeed, GPT-5 Pro noted in the chat history that it used such a method. Still, being able to program against an open-problems list and to uncover a concrete counterexample seems noteworthy to us.
\end{remark}

\section{For each fixed $n$ majority is optimal in a neighborhood of $p=0$.}\label{majopt1}

\begin{lemma}\label{lemma1}
For unbiased $f:\{\pm1\}^n\to\{\pm1\}$ we have
\[
\Bigl|\Phi_p(f)-p\sum_{i=1}^n|\hat f(i)|\Bigr|
\;\le\; \Bigl((n-1)\sqrt n+\tbinom{n}{2}\Bigr)p^2.
\]
\end{lemma}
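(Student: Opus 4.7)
The plan is to condition on the set $T := \{i : z_i \neq 0\}$ of non-erased coordinates and split $\Phi_p(f)$ into contributions from $|T|=0$, $|T|=1$, and $|T|\ge 2$. At leading order in $p$ only the single-coordinate regime contributes, and it will match the target sum $p\sum_i|\hat f(i)|$ up to an $O(p^2)$ correction; the empty-support and large-support regimes are themselves already $O(p^2)$.

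The key observation is that on the event $\mathrm{supp}(z)=T$, the multilinear extension collapses to
\[
f(z)\ =\ \sum_{S\subseteq T}\hat f(S)\prod_{i\in S}z_i\ =\ \mathbb{E}\bigl[f(x)\,\bigm|\, x_i=z_i\ \text{for}\ i\in T\bigr],
\]
so $|f(z)|\le 1$ pointwise on $\{-1,0,1\}^n$. When $|T|=0$, unbiasedness forces $f(z)=\hat f(\emptyset)=0$. When $T=\{i\}$, the extension reduces to $\hat f(i)\,z_i$, so $|f(z)|=|\hat f(i)|$, and $\Pr[\mathrm{supp}(z)=\{i\}]=p(1-p)^{n-1}$. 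When $|T|\ge 2$, I would combine the pointwise bound $|f(z)|\le 1$ with the union bound
\[
\Pr\bigl[|T|\ge 2\bigr]\ \le\ \sum_{i<j}\Pr[z_i\ne 0,\ z_j\ne 0]\ =\ \tbinom{n}{2}p^2.
\]
Summing the three contributions yields
\[
\Bigl|\Phi_p(f)-p(1-p)^{n-1}\sum_{i=1}^n|\hat f(i)|\Bigr|\ \le\ \tbinom{n}{2}p^2.
\]

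It remains to replace the prefactor $p(1-p)^{n-1}$ by $p$. Bernoulli's inequality gives $1-(1-p)^{n-1}\le (n-1)p$, while Cauchy--Schwarz with Parseval gives
\[
\sum_{i=1}^n|\hat f(i)|\ \le\ \sqrt n\,\sqrt{\sum_i \hat f(i)^2}\ \le\ \sqrt n\,\|f\|_2\ =\ \sqrt n.
\]
Multiplying, the substitution error is at most $(n-1)\sqrt n\,p^2$, and a triangle inequality combines it with the previous bound to give the stated inequality.

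There is no substantive obstacle; the only slightly non-obvious ingredient is the identification of the multilinear extension with a conditional expectation, which makes the sup bound $|f(z)|\le 1$ immediate and lets the $|T|\ge 2$ tail be controlled purely by the probability mass $\tbinom{n}{2}p^2$ rather than by any information about $\hat f$.
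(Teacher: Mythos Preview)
Your argument is correct and follows essentially the same route as the paper: condition on the support of $z$, note that the $|T|=0$ term vanishes by unbiasedness, the $|T|=1$ term contributes $p(1-p)^{n-1}\sum_i|\hat f(i)|$, and the $|T|\ge 2$ tail is bounded by $\binom{n}{2}p^2$ via $|f(z)|\le 1$; then replace $p(1-p)^{n-1}$ by $p$ using Bernoulli and $\sum_i|\hat f(i)|\le\sqrt n$. The one addition you make---identifying $f(z)$ with a conditional expectation to justify $|f(z)|\le 1$---is a point the paper uses implicitly.
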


\begin{proof}
Let $K:=|\{i:Z_i\neq 0\}|$, so $K\sim\mathrm{Bin}(n,p)$. By conditioning on $K$,
\[
\Phi_p(f)=\sum_{k=0}^n \Pr[K=k]\;\mathbb E\!\left[\,|f(Z)|\;\middle|\;K=k\right].
\]
We treat three cases.

\medskip
\noindent\textbf{Case \(K=0\).} Then $Z=0$ and $|f(Z)|=|\hat f(\emptyset)|=0$ since $f$ is unbiased.

\medskip
\noindent\textbf{Case \(K=1\).} We have
\[
\mathbb E\!\left[\,|f(Z)|\;\middle|\;K=1\right]
=\frac{1}{n}\sum_{i=1}^n |\hat f(i)|,
\]
hence the unconditional contribution from $K=1$ equals
\[
\Pr[K=1]\cdot \mathbb E\!\left[\,|f(Z)|\;\middle|\;K=1\right]
= p(1-p)^{n-1}\sum_{i=1}^n |\hat f(i)|.
\]
Therefore
\[
\bigl|\;p(1-p)^{n-1}-p\;\bigr|\sum_{i=1}^n|\hat f(i)|
\le p\bigl(1-(1-p)^{n-1}\bigr)\sum_{i=1}^n|\hat f(i)|
\le (n-1)\sqrt{n}\,p^2,
\]
using $1-(1-p)^{n-1}\le (n-1)p$ and $\sum_i|\hat f(i)|\le \sqrt n$ (by Cauchy--Schwarz).

\medskip
\noindent\textbf{Case \(K\ge2\).} Since $|f(Z)|\le 1$,
\[
\sum_{k\ge2}\Pr[K=k]\;\mathbb E\!\left[\,|f(Z)|\;\middle|\;K=k\right]
\;\le\;\Pr[K\ge2]
\;\le\; \sum_{1\le i<j\le n}\Pr[Z_i\neq 0,\,Z_j\neq 0]
=\binom{n}{2}p^2.
\]

\medskip
Combining the three cases gives
\[
\left|\Phi_p(f)-p\sum_{i=1}^n|\hat f(i)|\right|
\le (n-1)\sqrt n\,p^2+\binom{n}{2}p^2,
\]
as claimed.
\end{proof}

\begin{lemma}[\cite{ODbook21}, Thm.~2.33]\label{lemma2}
Let $n$ be odd. Among all Boolean functions $f:\{-1,1\}^n\to\{-1,1\}$, the quantity
\[
\sum_{i=1}^n \widehat{f}(i)
\]
is uniquely maximized by the majority function
\[
\Maj_n(x)\;=\;\sgn(x_1+\cdots+x_n).
\]
\end{lemma}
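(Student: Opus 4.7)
My approach is to rewrite the sum of linear Fourier coefficients as a single expectation and then maximize pointwise over $f(x)\in\{-1,+1\}$. First I would use $\widehat f(i)=\mathbb{E}[f(x)x_i]$ together with linearity of expectation to obtain
\[
\sum_{i=1}^n \widehat f(i)
\;=\;\mathbb{E}\bigl[f(x)\,S(x)\bigr]
\;=\;2^{-n}\sum_{x\in\{-1,1\}^n} f(x)\,S(x),\qquad S(x):=x_1+\cdots+x_n.
\]
Because the Boolean constraint $f(x)\in\{-1,+1\}$ is separable across $x$, the problem decouples into $2^n$ independent one-point optimizations: for each $x$ one simply picks $f(x)$ to maximize $f(x)S(x)$. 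The pointwise upper bound is $|S(x)|$, achieved iff $f(x)=\sgn(S(x))$ (provided $S(x)\neq 0$).

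The one place where oddness of $n$ enters, and the key step for \emph{uniqueness}, is a parity observation: $S(x)$ is the sum of $n$ numbers in $\{\pm1\}$, so it has the same parity as $n$. For odd $n$ this forces $S(x)$ to be a nonzero odd integer at every $x$, hence $\sgn(S(x))$ is unambiguously defined on all of $\{-1,1\}^n$. Collecting the pointwise optima then identifies the maximizer as $f(x)=\sgn(x_1+\cdots+x_n)=\Maj_n(x)$, and since the per-point choice is unique at every $x$, so is the global maximizer.

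I do not anticipate any genuine obstacle; the content of the lemma is essentially the parity step. Were $n$ even, the ``tie slice'' $\{x:S(x)=0\}$ would be nonempty and any Boolean function agreeing with $\Maj_n$ off this slice would also attain the maximum, so uniqueness would fail. This is exactly where the hypothesis ``$n$ odd'' is used, and it shows that the argument cannot be weakened in that direction.
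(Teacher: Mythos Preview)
Your argument is correct and complete: writing $\sum_i \widehat f(i)=\mathbb{E}[f(x)\,S(x)]$ with $S(x)=x_1+\cdots+x_n$, maximizing pointwise to get $f(x)=\sgn(S(x))$, and using the parity observation that $S(x)$ is odd (hence nonzero) for odd $n$ to secure uniqueness, is exactly the intended proof. The paper does not supply its own proof of this lemma---it simply cites \cite{ODbook21}, Thm.~2.33---and your argument is precisely the standard one given there, so there is nothing to contrast.
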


\begin{theorem}
Fix odd $n$. There exists $p_0=p_0(n)>0$ such that for all $p\in(0,p_0)$, the function $\Maj_n$ is the unique maximizer of $\Phi_p$ over unbiased Boolean $f$; i.e.,
\[
\Phi_p(\Maj_n) > \Phi_p(f)\qquad\text{for all unbiased }f\neq \Maj_n.
\]
\end{theorem}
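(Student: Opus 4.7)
The plan is to treat $\Phi_p$ perturbatively in $p$ and combine Lemmas~\ref{lemma1} and~\ref{lemma2}. By Lemma~\ref{lemma1} applied to both $f$ and $\Maj_n$,
\[
\Phi_p(\Maj_n)-\Phi_p(f)\;\ge\; p\bigl(L_1(\Maj_n)-L_1(f)\bigr)-2C_n\,p^2,
\]
where $L_1(g):=\sum_{i=1}^n|\hat g(i)|$ and $C_n:=(n-1)\sqrt n+\binom{n}{2}$ is an $f$-independent constant. The task therefore reduces to producing a uniform positive gap $L_1(\Maj_n)-L_1(f)\ge \delta_n>0$ across all unbiased $f\neq \Maj_n$, after which choosing $p_0$ so that $p_0\delta_n>2C_n p_0^2$ will close the argument.

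The step that I expect to require the most thought is bridging the unsigned quantity $L_1(f)$ with the signed sum $\sum_i \hat f(i)$ that Lemma~\ref{lemma2} actually controls. The bridge is the sign-flip symmetry of $\Phi_p$: since each $z_i$ has a symmetric distribution on $\{-1,0,1\}$, $\Phi_p$ is invariant under substitutions $x_i\mapsto \varepsilon_i x_i$ with $\varepsilon_i\in\{\pm1\}$, while Fourier singletons transform via $\hat f(i)\mapsto\varepsilon_i\hat f(i)$. Choosing $\varepsilon_i:=\sgn\hat f(i)$ produces an unbiased Boolean $\tilde f(x):=f(\varepsilon_1 x_1,\ldots,\varepsilon_n x_n)$ with $\Phi_p(\tilde f)=\Phi_p(f)$ and $\hat{\tilde f}(i)=|\hat f(i)|$, so $L_1(f)=\sum_i\hat{\tilde f}(i)$. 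Since $\hat{\Maj}_n(i)\ge0$ for each $i$, the same identity gives $L_1(\Maj_n)=\sum_i\hat{\Maj}_n(i)=:M$. Applying Lemma~\ref{lemma2} to $\tilde f$ then yields $L_1(f)\le M$, with strict inequality whenever $\tilde f\neq \Maj_n$.

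Finiteness of the set of Boolean functions on $n$ bits makes this gap uniform: setting
\[
\delta_n:=\min\{M-L_1(f)\;:\;f\text{ unbiased Boolean},\,\tilde f\neq \Maj_n\}>0
\]
and $p_0(n):=\delta_n/(2C_n)$ gives $\Phi_p(\Maj_n)>\Phi_p(f)$ for all $p\in(0,p_0)$ whenever $\tilde f\neq \Maj_n$. The last subtlety in matching the theorem as stated is the uniqueness clause: if $\tilde f=\Maj_n$, then $f$ is obtained from $\Maj_n$ by flipping signs on some subset of coordinates, and the invariance used above forces $\Phi_p(f)\equiv\Phi_p(\Maj_n)$ exactly, so the strict inequality cannot hold for such $f$. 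Thus "unique maximizer" must be read modulo the sign-flip orbit of $\Maj_n$; within that orbit equality is exact in $p$, while the perturbative estimate rules out every other unbiased Boolean competitor for $p\in(0,p_0(n))$.
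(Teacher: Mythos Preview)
Your argument mirrors the paper's proof: expand $\Phi_p$ via Lemma~\ref{lemma1}, use sign-flip invariance to reduce $L_1(f)$ to the signed level-$1$ sum controlled by Lemma~\ref{lemma2}, and extract a uniform gap $\delta_n>0$ by finiteness. Your final paragraph is a correct and worthwhile addition: the sign-flip orbit of $\Maj_n$ attains $\Phi_p(\Maj_n)$ identically, so the uniqueness clause must indeed be read modulo that orbit---a caveat the paper's proof absorbs into its ``w.l.o.g.'' normalization without making explicit.
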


\begin{proof}
By Lemma~\ref{lemma1},
\[
\Phi_p(g)=p\sum_{i=1}^n|\widehat g(i)|+E_g(p),\qquad |E_g(p)|\le C_n\,p^2,
\]
for every unbiased $g$, where $C_n:=(n-1)\sqrt n+\binom{n}{2}$.  
Since $\Phi_p$ and $\sum_i|\hat g(i)|$ are invariant under flipping individual coordinates, we may assume w.l.o.g.\ that all level‑1 coefficients are nonnegative for the competitor $f$, so $\sum_i|\widehat f(i)|=\sum_i\widehat f(i)$ and likewise for $\Maj_n$.

Let
\[
\delta_n\ :=\ \min_{f\ne \Maj_n}\ \Big(\sum_{i=1}^n \widehat{\Maj_n}(i)-\sum_{i=1}^n \widehat f(i)\Big)\ >\ 0,
\]
which is strictly positive by Lemma~\ref{lemma2} and finiteness of the domain \(\{-1,1\}^n\) for fixed \(n\) (uniqueness of the maximizer implies a positive gap). Then for any unbiased \(f\ne \Maj_n\),
\[
\Phi_p(\Maj_n)-\Phi_p(f)
= p\!\left(\sum_i\widehat{\Maj_n}(i)-\sum_i\widehat f(i)\right) + \bigl(E_{\Maj_n}(p)-E_f(p)\bigr)
\;\ge\; p\,\delta_n - 2C_n\,p^2.
\]
Choose \(p_0:=\min\{\delta_n/(4C_n),\,1\}\). Then for all \(p\in(0,p_0)\), we have \(p\,\delta_n-2C_n p^2\ge \tfrac12 p\,\delta_n>0\), proving the claim and the uniqueness.
\end{proof}

\section*{Acknowledgments}
P.I.\ thanks EpochAI for encouraging broad testing of models, and University of W\"urzburg for its warm hospitality. The P.I.\ was supported in part by NSF CAREER grant DMS-2152401 and a Simons Fellowship. This work benefited from interactions with ChatGPT (GPT-5 Pro, OpenAI), which was used to find the counterexample. Following arXiv policy, the model is acknowledged here but not listed as an author.

\appendix
\section*{Appendix A}
\small

\begin{verbatim}
The full chat conversation between P.I. and GPT-5 Pro (ChatGPT) is available here

        https://chatgpt.com/share/68e80fbd-1f94-8001-8eb3-32be3ccd1bf2
\end{verbatim}

\paragraph{Manual verification.}

\paragraph{Multilinear expansions of majority functions.}
The majority function $\Maj_n: \{-1,1\}^n \to \{-1,1\}$ is defined for all odd $n$ as
\[
\Maj_n(x_1,\dots,x_n)=\sgn(x_1+\cdots+x_n),
\]
i.e., $\Maj_n(x) = 1$ if a majority of the coordinates are $1$, and $-1$ otherwise. Its unique multilinear expansions for $n=3,5$ are
\begin{align*}
\Maj_3(x_1,x_2,x_3)&= \tfrac{1}{2}(x_1+x_2+x_3) - \tfrac{1}{2}x_1x_2x_3,\\
\Maj_5(x)
&=\tfrac{3}{8}\sum_{i=1}^5 x_i
-\tfrac{1}{8}\!\!\sum_{1\le i<j<k\le 5}\!\! x_i x_j x_k
+\tfrac{3}{8}\, x_1 x_2 x_3 x_4 x_5.
\end{align*}
See Theorem~5.19 in \cite{ODbook21} for the general formula for the Fourier coefficients of majority when $n$ is odd.

\paragraph{Multilinear expansion of the counterexample $f$.}
The counterexample found by GPT‑5 Pro,
\[
f(x_1,\ldots,x_5)\;=\;\sgn\!\big(x_1 - 3x_2+x_3 -x_4+3x_5\big),
\]
admits the following factorized multilinear form:
\[
f(x)=\tfrac14\Big(2(-x_2+x_5)+(1+x_2x_5)\,(x_1+x_3-x_4+x_1x_3x_4)\Big).
\]
Indeed, $f$ acts as follows: if $-x_2+x_5=0$, the output is the majority of $(x_1,x_3,-x_4)$; otherwise it outputs the majority of $(-x_2,x_5)$. Thus
\[
f(x)=\Maj_3\!\big(-x_2,\,x_5,\,\Maj_3(x_1,x_3,-x_4)\big).
\]
Plugging in $\Maj_3(a,b,c)=\tfrac12(a+b+c-abc)$ gives the displayed formula.

\paragraph{Expected value of $|\Maj_5(z)|$.}
Extend functions on $\{-1,1\}^5$ to $\{-1,0,1\}^5$ by evaluating their multilinear expansions at $z\in\{-1,0,1\}^5$. Let $z$ be drawn from the erasure model and set
\[
K=\#\{i:z_i\neq 0\}\sim\mathrm{Bin}(5,p),\qquad
\Pr(K=k)=\binom{5}{k}p^{k}(1-p)^{5-k}.
\]
Then
\begin{align*}
\mathbb{E}\,|\Maj_5(z)|
&=\sum_{k=0}^5 \mathbb{E}\!\left[\,|\Maj_5(z)|\mid K=k\right]\Pr(K=k)\\
&=0\cdot(1-p)^5+\tfrac{3}{8}\cdot 5p(1-p)^4+\tfrac{3}{8}\cdot 10p^2(1-p)^3\\
&\quad+\tfrac{5}{8}\cdot 10p^3(1-p)^2+\tfrac{5}{8}\cdot 5p^4(1-p)+1\cdot p^5\\
&=\tfrac{15}{8}p-\tfrac{15}{4}p^2+\tfrac{25}{4}p^3-\tfrac{45}{8}p^4+\tfrac{9}{4}p^5.
\end{align*}
Here are the conditional expectations, by symmetry (we may fix which coordinates are nonzero and average over their signs). \textbf{Case \(K=0\)}: \(z=(0,0,0,0,0)\), so \(|\Maj_5(z)|=0\).
\textbf{Case \(K=1\)}: without loss of generality (by symmetry) $z_1\neq0$, so \(|\Maj_5(z)|=\tfrac{3}{8}\).
\textbf{Case \(K=2\)}: assume $z_1,z_2\neq0$, then $\Maj_5(z)=\tfrac{3}{8}(z_1+z_2)$, hence the conditional expectation is $\tfrac{3}{8}$.
\textbf{Case \(K=3\)}: assume $z_1,z_2,z_3\neq0$, then $\Maj_5(z)=\tfrac{3}{8}(z_1+z_2+z_3)-\tfrac{1}{8}z_1z_2z_3$; a check over $\{\pm1\}^3$ gives $\tfrac{5}{8}$.
\textbf{Case \(K=4\)}: assume $z_1,\dots,z_4\neq0$, $z_5=0$. Writing $m$ for the number of $-1$'s among $(z_1,\dots,z_4)$, one gets $|\Maj_5(z)|=0$ iff $m=2$ (6 of 16 points) and $=1$ otherwise; hence $\tfrac{5}{8}$.
\textbf{Case \(K=5\)}: trivially the value is $1$.

\paragraph{Expected value of $|f(z)|$.}
Set
\[
M:=\tfrac12\!\big(z_1+z_3-z_4+z_1z_3z_4\big).
\]
Using the factorized form,
\[
f(z)=\tfrac12(-z_2+z_5)+\tfrac12(1+z_2z_5)M .
\]
Taking the conditional expectation over $(z_2,z_5)$ yields
\[
\mathbb{E}|f(z)|
=\frac{1-2p+2p^2}{2}\,\mathbb{E}|M|+p-\frac{p^2}{2},
\]
since the four cases are: both zero (prob.\ $(1-p)^2$), giving $\tfrac12|M|$; exactly one zero (prob.\ $2p(1-p)$), giving $\tfrac14(|M+1|+|M-1|)=\tfrac12$ because $M\in[-1,1]$; both nonzero and equal (prob.\ $p^2/2$), giving $|M|$; and both nonzero and opposite (prob.\ $p^2/2$), giving $1$.

It remains to compute $\mathbb{E}|M|$ by the number of nonzeros among $(z_1,z_3,z_4)$: with probability $(1-p)^3$ there are none and $|M|=0$; with probability $3p(1-p)^2$ there is exactly one, yielding $|M|=\tfrac12$; with probability $3p^2(1-p)$ there are exactly two, again averaging to $\tfrac12$; and with probability $p^3$ there are three, in which case $M$ is the majority of three bits and $|M|=1$. Therefore,
\[
\mathbb{E}|M|=\tfrac32p-\tfrac32p^2+p^3,
\]
and hence
\[
\mathbb{E}|f(z)|
=\frac{7}{4}p-\frac{11}{4}p^2+\frac{7}{2}p^3-\frac{5}{2}p^4+p^5.
\]
At $p=0.4$ this gives $\mathbb{E}|f(z)|=\frac{2689}{6250}=0.43024$, whereas the earlier computation yields $\mathbb{E}\,|\Maj_5(z)|=\frac{5363}{12500}=0.42904$.

\normalsize

\bibliographystyle{abbrv}

\end{document}